\documentclass[12pt]{article}

\usepackage{amsfonts}
\newtheorem{thm}{Theorem}[section]

\newtheorem{cor}[thm]{Corollary}
\newtheorem{definition}[thm]{Definition}
\newtheorem{prop}[thm]{Proposition}

\newcommand{\proof
}{\par\medskip\noindent {\bf Proof.\ \ }}
\newtheorem{rem}[thm]{Remark}
\newcommand{\be}{\begin{equation}}
\newcommand{\ee}{\end{equation}}
\newcommand{\openbox}{\leavevmode
  \hbox to8pt{\hfil\vrule\vbox to6pt{\hrule width6pt\vfil\hrule}\vrule}}

\newtheorem{conj}[thm]{Conjecture}
\newcommand{\qed}{\hbox to5pt{ } \hfill \openbox\bigskip\medskip}

\newcommand{\cF}{\mbox{$\cal F$}}

\newcommand{\floor}[1]{\left\lfloor{#1}\right\rfloor}
\newcommand{\vol}[1]{\mathrm{vol}\!\left(#1\right)}

\newcommand{\N}{\mathbb N}
\newcommand{\Z}{\mathbb Z}

\newcommand{\R}{\mathbb R}

\newcommand{\abs}[1]{\left\vert{#1}\right\vert}

\renewcommand{\det}[1]{\mathrm{det}\!\left(#1\right)}

\newcommand{\lcm}[1]{\mathrm{lcm}\!\left\{#1\right\}}

\newcommand{\cone}[1]{\mathrm{cone}\!\left(#1\right)}
\begin{document}

\title{Bounds for smooth Fano polytopes}
\author{G\'abor Heged\"{u}s
\\{\normalsize  \'Obuda University}
\\{\normalsize   John von Neumann Faculty of Informatics}
\\{\normalsize B\'ecsi \'ut 96, Budapest, Hungary, H-1037}
\\{\normalsize hegedus.gabor@nik.uni-obuda.hu}
}
\maketitle

\begin{abstract}
We characterize smooth Fano polytopes in terms of their delta-vector and f-vector. 

As an application we prove that the  delta-vector of a  smooth Fano polytope is unimodal and we give upper  and lower bound on the volume of  smooth Fano polytopes. 
\end{abstract}

\section{Introduction}
Throughout the paper $P$ will be a $d$-dimensional convex lattice polytope in $\R^d$. Denote the boundary of $P$ by $\partial P$.  

Let $L_P(m):=\abs{mP\cap\Z^d}$ denote the number of lattice points in $P$ dilated by a factor of $m\in\Z_{\geq 0}$. Similarly, let $L_{\partial P}(m):=\abs{\partial(mP)\cap\Z^d}$ denote the number of lattice points on the boundary of $mP$. In general the function $L_P$ is a polynomial of degree $d$, called the \emph{Ehrhart polynomial}.

The boundary volume $\vol{\partial P}$ is the sum of the volume of each facet of $P$ normalized with respect to the sublattice containing that facet. For example, in two dimensions the boundary volume  is equal to  the number of lattice points on the boundary of $P$.

In~\cite{Stan80} Stanley proved that the generating function for $L_P$ can be written as a rational function
$$\mathrm{Ehr}_P(t):=\sum_{m\geq 0}L_P(m)t^m=\frac{\delta_0+\delta_1t+\ldots+\delta_dt^d}{(1-t)^{d+1}},$$
where the coefficients $\delta_i$ are non-negative (although possibly zero). If we write $\delta_P(t)=\delta_0+\delta_1 t+\ldots +\delta_d t^d$, then the sequence $\left(\delta_0,\delta_1,\ldots,\delta_d\right)$ is known as the \emph{$\delta$-vector} of $P$, and satisfies some special numerical conditions. For example,
\begin{equation}\label{eq:volume_from_delta}
d!\,\vol{P}=\sum_{k=0}^d\delta_k.
\end{equation}

\begin{definition}\label{def:reflexive}
A convex lattice polytope $P$ is called \emph{reflexive} if the dual polytope
$$
P^\vee:=\{u\in\R^d\mid \left<u,v\right>\le 1\mbox{ for all }v\in P\}
$$
is also a lattice polytope.
\end{definition}

Reflexive polytopes  are in bijective correspondence with  Gorenstein toric Fano varieties (see~\cite{Bat94}). There are many interesting characterizations of reflexive polytopes (see  the list in~\cite{HM04}).

\begin{thm}\label{thm:Reflexive_conditions}
Let $P$ be a $d$-dimensional convex lattice polytope with $0\in P^\circ$, where $P^\circ$ denote the strict interior of $P$. The following statements are equivalent:
\begin{itemize}
\item[(i)] $P$ is reflexive;
\item[(ii)] $d\,\vol{P}=\vol{\partial P}$;
\item[(iii)] $\delta_i=\delta_{d-i}$ for all $0\le i\le d$.
\end{itemize}
\end{thm}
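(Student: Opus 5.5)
The plan is to prove (i)$\Leftrightarrow$(iii) with Ehrhart--Macdonald reciprocity, and (i)$\Leftrightarrow$(ii) with a pyramid decomposition of $P$ over its facets with apex $0$.

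For (i)$\Leftrightarrow$(iii), write each facet $F$ of $P$ as $\{x:\langle u_F,x\rangle=h_F\}$, where $u_F$ is the primitive inner normal in the dual lattice and $h_F$ is a positive integer; positivity uses $0\in P^\circ$. Then $P$ is reflexive exactly when every $h_F=1$, since the vertices of $P^\vee$ are the vectors $u_F/h_F$. If all $h_F=1$, then for $m\ge1$ we have $m P^\circ\cap\Z^d=(m-1)P\cap\Z^d$, because a lattice point $x$ satisfies $\langle u_F,x\rangle<m$ if and only if $\langle u_F,x\rangle\le m-1$. Combined with reciprocity $L_P(-m)=(-1)^d L_{P^\circ}(m)$, this gives $L_P(-m)=(-1)^dL_P(m-1)$. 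That identity is equivalent to $\mathrm{Ehr}_P(1/t)=(-1)^{d+1}t\,\mathrm{Ehr}_P(t)$, and comparing numerators yields $\delta_i=\delta_{d-i}$.

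For the converse, the symmetry of the $\delta$-vector gives back $L_{P^\circ}(m)=L_P(m-1)$ for all $m$. From $m=1$, the only interior lattice point is $0$. From $m=2$, the interior lattice points of $2P$ are exactly the lattice points of $P$. Now suppose some $h_F\ge2$. I will find a lattice point $x$ with $\langle u_F,x\rangle=1$ lying in $P$. Such an $x$ cannot lie on $\partial P$: on $F$ we have $\langle u_F,x\rangle=h_F>1$, and I will also need it to avoid the other facets. This is the main obstacle. The cleanest route is probably to bypass it with counting: use the $m=2$ identity together with the leading and subleading coefficients. Indeed, $L_{P^\circ}(m)=L_P(m-1)$ gives, at order $m^{d-1}$,
$$-\tfrac12\vol{\partial P}=-d\,\vol{P}+\tfrac12\vol{\partial P},$$
so $d\,\vol{P}=\vol{\partial P}$, which is (ii). Then (ii)$\Rightarrow$(i) finishes this direction.

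For (ii)$\Leftrightarrow$(i), decompose $P$ into pyramids $\mathrm{conv}(0,F)$. The Euclidean volume of such a pyramid is $\frac1d\,\mathrm{dist}(0,\mathrm{aff}F)\cdot\mathrm{vol}_{d-1}(F)$. The lattice distance is $h_F$, and the normalized facet volume absorbs the factor $\|u_F\|$, so $d\,\vol{P}=\sum_F h_F\,\vol{F}$ with normalized facet volumes. Since each $h_F\ge1$, we get $d\,\vol{P}\ge\vol{\partial P}$, with equality if and only if every $h_F=1$, that is, if and only if $P$ is reflexive. This proves (i)$\Leftrightarrow$(ii). It also closes the gap above, since (iii)$\Rightarrow$(ii)$\Rightarrow$(i).

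The delicate points are the normalization convention for $\vol{\partial P}$, namely that the relative lattice volume of $F$ equals $\|u_F\|$ times its Euclidean $(d-1)$-volume. The other is confirming the standard fact that the second Ehrhart coefficient is $\frac12\vol{\partial P}$; I will assume both as known.
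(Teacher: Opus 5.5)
Your proof is correct. The paper gives no proof of Theorem~\ref{thm:Reflexive_conditions} to compare against. It quotes the result as known: Hibi's Palindromic Theorem covers (i)$\Leftrightarrow$(iii), and the standard list of characterizations of reflexive polytopes covers (ii).

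Your (i)$\Rightarrow$(iii) is the standard reciprocity argument. For the converse you avoid the usual lattice-point step. The $m^{d-1}$-coefficient of $L_P(-m)=(-1)^dL_P(m-1)$ gives (ii) directly. The exact pyramid identity $d\,\vol{P}=\sum_F h_F\vol{F}$ then turns (ii) into $h_F=1$ for all $F$, and gives (i)$\Rightarrow$(ii) for free. This reduces the converse to coefficient bookkeeping, at the cost of importing the fact that the second Ehrhart coefficient is $\frac12\vol{\partial P}$.

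The route you abandoned also works if you let $m$ grow instead of fixing $m=2$. Since $(m-1)P\cap\Z^d\subseteq mP^\circ\cap\Z^d$ always holds, equal counts force equal sets. If $h_F\ge 2$, the lattice hyperplane $\langle u_F,x\rangle=mh_F-1$ contains $(m-\frac{1}{h_F})F$, which lies in $mP^\circ$ because $0\in P^\circ$. Its relative inradius grows linearly in $m$, so for large $m$ it contains a lattice point. That point lies in $mP^\circ$ but not in $(m-1)P$, a contradiction.

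There are two slips to fix.
\begin{itemize}
\item Since you take $h_F>0$ with $0\in P^\circ$, $u_F$ must be the \emph{outer} primitive normal, i.e.\ $P\subseteq\{x:\langle u_F,x\rangle\le h_F\}$, not the inner one.
\item More importantly, your closing remark states the normalization backwards. For primitive $u_F$ the lattice $u_F^\perp\cap\Z^d$ has covolume $\|u_F\|$. So the relative lattice volume of $F$ is $\mathrm{vol}_{d-1}(F)/\|u_F\|$, not $\|u_F\|\,\mathrm{vol}_{d-1}(F)$. The correct version is what your pyramid computation actually uses, via $\mathrm{dist}(0,\mathrm{aff}F)=h_F/\|u_F\|$. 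Under the convention as you wrote it, $d\,\vol{P}=\sum_F h_F\vol{F}$ would be false.
\end{itemize}
Finally, delete the unfinished $m=2$ paragraph, since nothing in the final argument depends on it.
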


Theorem~\ref{thm:Reflexive_conditions}~(iii) is  known as \emph{Hibi's Palindromic Theorem}~\cite{Hib91}.

\begin{definition}\label{def:smooth}
A $d$-dimensional convex lattice polytope $P$ is called \emph{smooth Fano} if the vertices of any facet of $P$ form a $\Z$-basis of the ambient lattice $\Z^d$.
\end{definition}

Clearly any smooth Fano polytope is simplicial and reflexive. Smooth Fano polytopes correspond  to non-singular toric Fano varieties, and have been classified up to dimension eight~\cite{Obr07}. 
These polytopes are   the subject of much study (for example,~\cite{Baty91,Obr07}).

We recall here the definition of triangulation.
\begin{definition}\label{def:unimodular}
A \em{triangulation}  $\Delta$ of a polytope $P$ is a partition of $P$ into simplices
such that (i) the union of all them equals $P$ and (ii) the intersection of any pair of
them is a  common face.
\end{definition}



We say that a polytope $P$ has the {\em integer decomposition property}, shortly
IDP,
if for each $c\in \N$, $z\in cP\cap {\Z}^d$ there 
exist $x_1, \ldots ,x_c\in P\cap {\Z}^d$ such that $\sum_i x_i=z$.
It is a well-known fact that if  $P$ is a convex lattice polytope with uni-modular triangulation then $P$ has IDP (see \cite[1.2.5]{HPPS21}).



The main motivation of our research was to give appropriate lower and upper bounds on the volume of smooth Fano polytopes in terms of the dimension of the polytope.

First we characterize smooth Fano polytopes in terms of their delta-vector and f-vector.

\begin{thm}\label{prop:smooth_equivalences}
Let $P$ be a $d$-dimensional reflexive simplicial polytope. The following are equivalent:
\begin{itemize}
\item [(i)] $P$ is smooth Fano;
\item [(ii)] $h_k=\delta_k$, for $0\leq k\leq d$;
\item [(iii)] $f_{d-1}=d!\,\vol{P}$;
\item [(iv)] $f_{d-1}=(d-1)!\,\vol{\partial P}$.
\end{itemize}
\end{thm}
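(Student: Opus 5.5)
The proof will rest on one geometric observation. Since $P$ is reflexive with $0\in P^\circ$, $P$ is the union of the cones over its facets, namely the pyramids $\mathrm{conv}(0,F)$ for $F$ a facet. Because $P$ is simplicial, each such pyramid is a $d$-simplex $S_F$ with vertices $0,v_1,\dots,v_d$, where $v_1,\dots,v_d$ are the vertices of $F$. Its normalized volume is
\[
d!\,\vol{S_F}=\abs{\det{v_1,\dots,v_d}}\ge 1,
\]
with equality if and only if $v_1,\dots,v_d$ is a $\Z$-basis of $\Z^d$.

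Reflexivity also gives that $F$ lies at lattice distance $1$ from the origin. Hence the normalized volume of $F$ in the sublattice of its affine hull equals $\abs{\det{v_1,\dots,v_d}}$. Summing over facets, I get
\[
d!\,\vol{P}=\sum_F\abs{\det{v_F}}=(d-1)!\,\vol{\partial P}.
\]
Note that this is exactly Theorem~\ref{thm:Reflexive_conditions}(ii), so I can cite it instead. Each summand is at least $1$, so $d!\,\vol{P}\ge f_{d-1}$, with equality if and only if every facet determinant is $1$, that is, if and only if $P$ is smooth Fano. This gives (i)$\Leftrightarrow$(iii), and Theorem~\ref{thm:Reflexive_conditions}(ii) immediately gives (iii)$\Leftrightarrow$(iv).

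For (ii), I recall that $h_k$ denotes the $h$-vector of the boundary simplicial complex of $P$, so $\sum_k h_k=f_{d-1}$. I also use \eqref{eq:volume_from_delta}, which says $\sum_k\delta_k=d!\,\vol{P}$. Then (ii)$\Rightarrow$(iii) follows at once by summing the coefficients.

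For (i)$\Rightarrow$(ii), I will use the triangulation $\Delta$ of $P$ by the simplices $S_F$. When $P$ is smooth Fano, this triangulation is unimodular. For a unimodular triangulation, the Ehrhart series equals the Hilbert series of the Stanley--Reisner ring of $\Delta$, since lattice points of $mP$ correspond bijectively to monomials of degree $m$ supported on faces. Concretely, each point of $mP$ lies in the relative interior of a unique cone over a face $G$ of $\Delta$ and is a unique nonnegative integer combination of the vertices of $G$. So
\[
\mathrm{Ehr}_P(t)=\sum_{G\in\Delta} \frac{t^{\abs{G}}}{(1-t)^{\abs{G}}},
\]
where $\abs{G}$ counts vertices, with $0$ as a vertex. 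The faces of $\Delta$ are the cones $\{0\}\cup\sigma$ and the boundary faces $\sigma$, for $\sigma$ ranging over the boundary complex including the empty face. Summing gives
\[
\sum_{\sigma}\Big(\frac{t}{1-t}\Big)^{\abs{\sigma}}\Big(1+\frac{t}{1-t}\Big)=\frac{1}{1-t}\sum_{\sigma} f\text{-terms},
\]
which is the standard identity producing $\sum_k h_kt^k/(1-t)^{d+1}$. So $\delta_k=h_k$.

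The main obstacle is the bookkeeping in this last computation, namely matching $\sum_{i} f_{i-1}t^i(1-t)^{d-i}=\sum_k h_kt^k$. This is just the definition of the $h$-vector, so I expect it to be routine once the cone decomposition is set up carefully. With that, the cycle of implications (i)$\Rightarrow$(ii)$\Rightarrow$(iii)$\Rightarrow$(i) and (iii)$\Leftrightarrow$(iv) completes the proof.
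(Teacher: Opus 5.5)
Your proof is correct and has the same structure as the paper's. Both run the cycle (i)$\Rightarrow$(ii)$\Rightarrow$(iii)$\Rightarrow$(i), with (iii)$\Rightarrow$(i) coming from the facet-determinant sum $d!\,\vol{P}=\sum_F\abs{\det{v_F}}$ and (iii)$\Leftrightarrow$(iv) from Theorem~\ref{thm:Reflexive_conditions}(ii). The only difference is in (i)$\Rightarrow$(ii): the paper cites Hibi's Proposition~\ref{prop:Hibi_compressed_delta} for the unimodular boundary triangulation, while you prove that special case directly with the unimodular cone decomposition and the $f$-to-$h$ identity, which is a valid self-contained substitute.
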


\begin{cor} \label{unimodal}
Let $P$ be a $d$-dimensional smooth Fano polytope. 
Then $\delta_i(P)\leq {n-d+i-1 \choose i}$ for each $0\leq i\leq \floor{\frac{d}{2}}$. 
The delta-vector of a smooth Fano polytope is unimodal.
\end{cor}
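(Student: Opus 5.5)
By Theorem~\ref{prop:smooth_equivalences}, (i)$\Rightarrow$(ii), the $\delta$-vector of a smooth Fano polytope $P$ coincides with the $h$-vector of its boundary complex, so $\delta_i(P)=h_i(P)$ for $0\le i\le d$. Here $n$ denotes the number of vertices of $P$, i.e.\ $n=f_0$. Since $P$ is simplicial, $\partial P$ is a simplicial $(d-1)$-sphere with $n$ vertices. The plan is therefore to transport known results on $h$-vectors of simplicial polytopes to the $\delta$-vector.

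For the upper bound I use the Upper Bound Theorem in its $h$-vector form (McMullen): for a simplicial $d$-polytope with $n$ vertices,
\[
h_i\le {n-d+i-1\choose i}\qquad (0\le i\le \floor{\tfrac d2}).
\]
This is the count of monomials of degree $i$ in $n-d$ variables. It comes from the Stanley--Reisner ring modulo a linear system of parameters, which is Cohen--Macaulay, so its Hilbert function is bounded by that of a polynomial ring in $n-d$ variables. Substituting $\delta_i=h_i$ gives the first claim.

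For unimodality I use the Dehn--Sommerville relations $h_i=h_{d-i}$ together with the Generalized Lower Bound Theorem, i.e.\ the necessity part of the $g$-theorem (Stanley, via the hard Lefschetz theorem for projective toric varieties). It gives
\[
h_0\le h_1\le\cdots\le h_{\floor{d/2}}.
\]
Combined with the symmetry this shows that the $h$-vector is unimodal, and hence so is the $\delta$-vector. The symmetry of $\delta$ also follows directly from Theorem~\ref{thm:Reflexive_conditions}~(iii), since $P$ is reflexive.

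All the combinatorial weight is carried by classical theorems about simplicial polytopes, so no step is a real obstacle. The only step needing care is the identification $\delta=h$, which is exactly Theorem~\ref{prop:smooth_equivalences}. Geometrically it holds because coning the facets from the origin gives a unimodular triangulation of $P$ (each facet's vertices form a $\Z$-basis), and Betke--McMullen then gives $\delta_P=h$ of the boundary complex. I would also check that the convention $f_{-1}=1$ in the definition of $h$ matches $\delta_0=1$.
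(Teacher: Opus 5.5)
Your proposal is correct and is the paper's argument: identify $\delta_P$ with $h_P$ via Theorem~\ref{prop:smooth_equivalences}~(i)$\Rightarrow$(ii), then apply McMullen's bound (Theorem~\ref{upper_bound_theorem_h}) and the Dehn--Sommerville relations together with the $g$-theorem monotonicity (Theorem~\ref{g_theorem}). Your reading of $n$ as $f_0$ agrees with the paper's $n=\abs{\partial P\cap\Z^d}$, because a unimodular facet contains no lattice points other than its vertices.
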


\begin{rem}

In \cite{APPS22} K. A. Adiprasito, S. Papadakis, V. Petrotou and J. K. Steinmeyer proved that the  delta-vector of a  reflexive polytope  with IDP is unimodal. 

\end{rem}

\begin{cor} \label{smooth_bounds}
Let $P$ be a $d$-dimensional smooth Fano polytope with $n:=\abs{\partial P\cap\Z^d}$. Then
$$
(d-1)n-(d+1)(d-2)\leq d!\,\vol{P}\leq {n-\floor{(d+1)/2}\choose n-d}+{n-\floor{(d+2)/2} \choose n-d}.
$$
\end{cor}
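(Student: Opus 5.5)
The plan is to work entirely through Theorem~\ref{prop:smooth_equivalences}, which identifies the $\delta$-vector of the smooth Fano polytope $P$ with the $h$-vector of its boundary complex. By (\ref{eq:volume_from_delta}) this gives $d!\,\vol{P}=\sum_k\delta_k=\sum_k h_k=f_{d-1}$. Both inequalities in Corollary~\ref{smooth_bounds} then become statements about a simplicial $(d-1)$-sphere. Its vertices are the $n$ boundary lattice points. Indeed, every facet is a unimodular simplex, so the only lattice points on a facet are its vertices, and hence $f_0=n$.

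For the upper bound I would invoke the Upper Bound Theorem (McMullen) for simplicial $d$-polytopes with $n$ vertices. It gives $h_i\le\binom{n-d+i-1}{i}$ for $i\le\floor{d/2}$, and the same bound appears in Corollary~\ref{unimodal}. Combined with the Dehn--Sommerville relations $h_i=h_{d-i}$, the number of facets is at most that of the cyclic polytope $C(n,d)$. The plan is to cite the classical closed form
$$f_{d-1}(C(n,d))=\binom{n-\floor{(d+1)/2}}{n-d}+\binom{n-\floor{(d+2)/2}}{n-d}.$$
If an independent check is wanted, sum $\sum_{i}h_i$ using the hockey-stick identity $\sum_{i=0}^{m}\binom{n-d-1+i}{i}=\binom{n-d+m}{m}$ on each half. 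The two halves correspond to $m=\floor{(d-1)/2}$ and $m=\floor{d/2}$, and rewriting gives the two binomials in the statement. This bookkeeping with parities is routine.

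For the lower bound I would use Barnette's Lower Bound Theorem for simplicial $d$-polytopes with $n$ vertices, which states $f_{d-1}\ge (d-1)n-(d+1)(d-2)$. Equivalently, in $h$-vector terms, $h_1\le h_2$ together with $h_0=1$, $h_1=n-d$ and symmetry suffices for $d\ge 3$. The boundary of $P$ is a simplicial polytope boundary, so the theorem applies directly. For $d=2$ the bound reads $n\le 2\vol{P}$, which is an equality, so that case is fine too.

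The main obstacle is only the identification $d!\,\vol{P}=f_{d-1}$ and $f_0=n$, which is exactly Theorem~\ref{prop:smooth_equivalences}(iii). After that, both bounds are classical results about simplicial polytopes. The one place requiring care is matching the floor conventions in the cyclic-polytope facet formula.
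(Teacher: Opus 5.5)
Your proof is correct and is essentially the paper's. Theorem~\ref{prop:smooth_equivalences} turns $d!\,\vol{P}$ into $f_{d-1}$, and then the Upper Bound Theorem (the cyclic polytope count) and Barnette's Lower Bound Theorem finish; the paper reaches the lower bound via Corollary~\ref{lower_bound_volume2}. Your observation that $f_0=n$, because unimodular facets contain no lattice points other than their vertices, makes explicit a step the paper leaves tacit.

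One harmless slip in an aside: $h_1\le h_2$ plus symmetry yields $\sum_i h_i\ge 2+(d-1)h_1$ only for $d\le 5$. In general you need the full chain $h_1\le h_2\le\cdots\le h_{\floor{d/2}}$ of Theorem~\ref{g_theorem}(ii). Your main argument cites Barnette directly, so it is unaffected.
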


\begin{cor} \label{volume_bound}
Let $P$ be a $d$-dimensional smooth Fano polytope. 
Then
$$
d!\vol{P}\leq 2{\floor{\frac{5}{2}d} \choose 2d}\approx 2\left( \frac{5^5}{4^4} \right)^{d/2}.
$$
\end{cor}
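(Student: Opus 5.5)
The plan is to reduce the statement to an optimization over $n$, using the upper bound of Corollary~\ref{smooth_bounds}. Write $n=\abs{\partial P\cap\Z^d}$. For a smooth Fano polytope, $P$ is simplicial and reflexive, and every boundary lattice point is a vertex. So $n$ is the number of vertices of the simplicial $(d-1)$-sphere $\partial P$. Corollary~\ref{smooth_bounds} gives
$$
d!\,\vol{P}\leq {n-\floor{(d+1)/2}\choose n-d}+{n-\floor{(d+2)/2} \choose n-d},
$$
and the first summand dominates the second. It therefore suffices to bound $\max_n {n-\floor{(d+1)/2}\choose n-d}$ by ${\floor{\frac{5}{2}d}\choose 2d}$.

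The first ingredient is an a priori upper bound on $n$ in terms of $d$. Here I would invoke Casagrande's theorem that a smooth Fano $d$-polytope has at most $3d$ vertices, with $n=3d$ only for $d$ even. (A sharper count could come from Theorem~\ref{prop:smooth_equivalences} and Corollary~\ref{unimodal}, but the vertex bound is the natural external input.) With $m=\floor{(d+1)/2}$, the binomial ${n-m\choose n-d}={n-m\choose d-m}$ is increasing in $n$. Plugging in $n\le 3d$ gives
$$
{3d-\floor{(d+1)/2}\choose d-\floor{(d+1)/2}}={\floor{\frac{5d}{2}}\choose \floor{d/2}}={\floor{\frac{5}{2}d}\choose 2d},
$$
using $3d-\floor{(d+1)/2}=\floor{(5d)/2}$ (checked separately for $d$ even and $d$ odd) and the symmetry $\binom{N}{k}=\binom{N}{N-k}$ with $N-\floor{d/2}=2d$.

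The second summand, evaluated at the same $n$, is at most the first, since ${a-1\choose k}\le{a\choose k}$. Together these give $d!\vol{P}\le 2{\floor{5d/2}\choose 2d}$. The asymptotic statement follows from Stirling's formula: ${5d/2\choose d/2}\approx \left(\frac{(5/2)^{5/2}}{(1/2)^{1/2}\,2^{2}}\right)^{d}$ up to polynomial factors. Simplifying gives $(5^{5}/4^4)^{d/2}$, as claimed, with the $\approx$ read as equality up to subexponential factors.

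The main obstacle is the vertex bound $n\le 3d$. It is not proved earlier in the paper, so I would cite Casagrande (or Øbro's refinement) rather than derive it. If a self-contained route were needed, I would try combining the lower bound $(d-1)n-(d+1)(d-2)\le d!\vol{P}$ with the $\delta$-vector constraints. I expect that route to fail to give a linear bound on $n$, which is why the external citation is the plan.
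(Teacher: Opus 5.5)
Your proposal is correct and follows the same route as the paper. Both arguments use Casagrande's bound $n\le 3d$, the monotonicity in $n$ of ${n-\floor{(d+1)/2}\choose d-\floor{(d+1)/2}}$, and the cyclic-polytope bound of Corollary~\ref{smooth_bounds}; your identity $3d-\floor{(d+1)/2}=\floor{5d/2}$ and your comparison of the two summands spell out the final inequality that the paper states without comment, and the only other difference is that you get the asymptotic from Stirling while the paper quotes Sondow's explicit bounds on ${(r+1)s\choose s}$ with $r=4$.
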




We collected some basic facts about the combinatorics of polytopes in Section 2. We investigate the boundary triangulations of  reflexive polytopes in Section 3. 
We prove our main results in Section 4. In Section 5 we present a conjecture about the upper bound  for the volume of smooth Fano  polytopes.

\section{Combinatorics of polytopes}

Let $d\geq 2$ and $n\geq d+1$ be integers. Consider the convex hull of any $n$ distinct points on the moment curve $\{(t,t^2,\ldots ,t^d):t\in \R\}$. It can be shown that the combinatorial structure of the simplicial $d$--polytope $C(n,d)$ is independent of the actual choice of the points, and this polytope is the cyclic $d$-polytope with $n$ vertices. It can be shown that
\begin{thm} \label{f_vector_cyclic}
$$
f_{d-1}(C(n,d))={n-\floor{(d+1)/2}\choose n-d}+{n-\floor{(d+2)/2} \choose n-d}.
$$
\end{thm}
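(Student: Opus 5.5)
The plan is to compute the facet count from the $h$-vector of $C(n,d)$. The cyclic polytope is simplicial and neighborly, so every set of at most $\floor{d/2}$ vertices spans a face. Hence $f_{k-1}(C(n,d))={n\choose k}$ for $0\le k\le \floor{d/2}$.

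For a simplicial $d$-polytope the $h$-vector is defined by $\sum_k h_k t^{d-k}=\sum_k f_{k-1}(t-1)^{d-k}$, with $f_{-1}=1$. Two facts about it are needed:
\begin{itemize}
\item the Dehn--Sommerville relations give $h_k=h_{d-k}$;
\item the coefficient of the top power gives $f_{d-1}=\sum_{k=0}^d h_k$.
\end{itemize}
Since the low face numbers agree with those of the full simplex on $n$ vertices, only terms with $k\le\floor{d/2}$ contribute to $h_k$ for $k\le\floor{d/2}$. The standard computation then gives $h_k=\sum_{j\le k}(-1)^{k-j}{d-j\choose k-j}{n\choose j}={n-d+k-1\choose k}$ for $0\le k\le\floor{d/2}$. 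I would verify this identity via generating functions: the neighborly part of $\sum_k f_{k-1}x^k(1-x)^{d-k}$ agrees with $(1-x)^{d-n}$ up to degree $\floor{d/2}$, and the coefficients of $(1-x)^{-(n-d)}$ are exactly ${n-d+k-1\choose k}$.

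Next, sum the $h$-vector using symmetry. For $d$ even, $d=2m$, we get $f_{d-1}=2\sum_{k=0}^{m-1}{n-d+k-1\choose k}+{n-d+m-1\choose m}$. For $d$ odd, $d=2m+1$, we get $f_{d-1}=2\sum_{k=0}^{m}{n-d+k-1\choose k}$.

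The hockey-stick identity $\sum_{k=0}^{r}{a+k\choose k}={a+r+1\choose r}$ converts each partial sum into a single binomial.
\begin{itemize}
\item For $d$ odd, the sum becomes $2{n-d+m\choose m}=2{n-m-1\choose n-d}$, which matches the claim since $\floor{(d+1)/2}=\floor{(d+2)/2}=m+1$.
\item For $d$ even, the result is ${n-d+m-1\choose m-1}+{n-d+m\choose m}={n-m-1\choose n-d}+{n-m\choose n-d}$. Here we used ${n-d+m-1\choose m-1}+{n-d+m-1\choose m}={n-d+m\choose m}$. This matches the claim, since $\floor{(d+1)/2}=m$ and $\floor{(d+2)/2}=m+1$.
\end{itemize}

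The main obstacle is the bookkeeping in the middle step: establishing $h_k={n-d+k-1\choose k}$ for $k\le\floor{d/2}$ cleanly, and keeping the parity indices straight in the final binomial identities. Everything else is the standard Dehn--Sommerville and neighborliness theory, which I would cite.
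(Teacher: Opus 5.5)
The paper gives no proof of this formula. It is quoted without proof as a standard fact (``It can be shown that''), so there is no argument of the paper's to compare against.

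Your derivation is correct and is the standard one:
\begin{itemize}
\item Neighborliness gives $h_k={n-d+k-1\choose k}$ for $k\le\floor{d/2}$. Your generating-function check is sound because $x^k(1-x)^{d-k}$ has order $k$. So replacing $f_{k-1}$ by ${n\choose k}$ for \emph{all} $k\le n$ changes nothing in degrees $\le\floor{d/2}$, including for $k>d$, where $(1-x)^{d-k}$ is a genuine power series. After the replacement the sum collapses to $(1-x)^{d-n}$.
\item Dehn--Sommerville supplies the upper half of the $h$-vector.
\item The identity $f_{d-1}=\sum_k h_k$ is just $t=1$ in your defining relation.
\item The hockey-stick and Pascal steps are right in both parities. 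As sanity checks, $d=2$ gives $n$, $d=3$ gives $2n-4$, and $C(6,4)$ gives $9$.
\end{itemize}

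Your route buys two things over the bare citation. First, it uses only $\floor{d/2}$-neighborliness, so it proves the formula for every neighborly simplicial $d$-polytope with $n$ vertices. Second, it shows that $C(n,d)$ attains equality in McMullen's bound $h_i\le{n-d+i-1\choose i}$ (Theorem~\ref{upper_bound_theorem_h}) for all $i\le\floor{d/2}$. Consequently, applying your summation to the termwise bounds of Corollary~\ref{unimodal}, together with $\delta_i=\delta_{d-i}$, yields the upper bound of Corollary~\ref{smooth_bounds} directly. That route bypasses Theorem~\ref{upper_bound_theorem} for $f$-vectors.
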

 In \cite{MM70} Mcmullen proved  the famous Upper Bound Theorem.
\begin{thm} \label{upper_bound_theorem}
(Upper Bound Theorem, Mcmullen, 1970)  
Let $P$ be a $d$-polytope with $n$ vertices. Then
$$
f_j(P)\leq f_j(C(n,d))
$$ 
for all $j$, $1\leq j\leq d-1$.
\end{thm}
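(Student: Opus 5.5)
The plan is McMullen's classical argument: reduce to simplicial polytopes, rewrite the statement in terms of the $h$-vector, and bound each $h_k$ using shellings.

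\textbf{Reduction to simplicial polytopes.} If $P$ is not simplicial, perturb its vertices slightly into general position (``pulling'' the vertices one at a time). This gives a simplicial polytope $P'$ with the same number $n$ of vertices, and every face of $P$ is subdivided into faces of $P'$, so $f_j(P)\le f_j(P')$ for all $j$. Hence it suffices to treat simplicial $P$.

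\textbf{Translation to $h$-numbers.} For simplicial $P$, define $h_0,\dots,h_d$ by
$$\sum_i f_{i-1}(t-1)^{d-i}=\sum_k h_k t^{d-k},\qquad f_{-1}=1.$$
Inverting, each $f_j$ is a linear combination of the $h_k$ with nonnegative coefficients:
$$f_{j-1}=\sum_k \binom{d-k}{j-k} h_k.$$
The Dehn--Sommerville equations give $h_k=h_{d-k}$. Therefore it is enough to show
$$h_k(P)\le \binom{n-d+k-1}{k}\qquad\text{for } 0\le k\le \floor{d/2}.$$
The cyclic polytope $C(n,d)$ attains equality in all of these bounds. The reason is that it is $\floor{d/2}$-neighborly, so $f_{i-1}=\binom{n}{i}$ for $i\le\floor{d/2}$, and this forces $h_k=\binom{n-d+k-1}{k}$ in that range. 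Combining these three facts yields $f_j(P)\le f_j(C(n,d))$.

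\textbf{Bounding the $h_k$.} This step is the heart of the proof and the main obstacle. First, the boundary complex of $P$ is shellable by the Bruggesser--Mani line shelling. In any shelling, $h_k$ counts the facets whose restriction face has size $k$.
\begin{itemize}
\item[(a)] For a vertex $v$, the link $P/v$ is the boundary of a simplicial $(d-1)$-polytope (the vertex figure). A line shelling can be chosen so that the star of $v$ is shelled first. Reading off restriction sets then gives $h_k(P/v)\le h_k(P)$.
\item[(b)] A double-counting of faces containing each vertex gives
$$\sum_{v}h_k(P/v)=(k+1)h_{k+1}(P)+(d-k)h_k(P).$$
\end{itemize}
Combining (a) and (b) gives $(k+1)h_{k+1}+(d-k)h_k\le n\,h_k$, that is,
$$h_{k+1}\le \frac{n-d+k}{k+1}\,h_k.$$
Starting from $h_0=1$, induction on $k$ yields the binomial bound above.

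The delicate points are the shelling argument in (a), namely choosing a line through the interior that first crosses the facet hyperplanes around $v$, and the exact coefficient bookkeeping in (b). I would check (b) first on small cases such as the simplex and the cross-polytope.
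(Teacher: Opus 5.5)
Your proposal is correct and is McMullen's original proof of the Upper Bound Theorem. You reduce to simplicial polytopes by pulling vertices. You pass to $h$-numbers via $f_{j-1}=\sum_k {d-k\choose j-k}h_k$ and the Dehn--Sommerville relations. You then obtain $h_k\le{n-d+k-1\choose k}$ from two facts: $h_k(P/v)\le h_k(P)$, which comes from shelling the star of $v$ first in a Bruggesser--Mani shelling, and $\sum_v h_k(P/v)=(k+1)h_{k+1}+(d-k)h_k$. I checked both and they are right. The paper does not prove this theorem at all; it cites McMullen and notes only that his solution rests on the bound of Theorem~\ref{upper_bound_theorem_h}, which is exactly your third step.
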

Mcmullen based his solution of the Upper Bound Theorem on the following result.
\begin{thm} \label{upper_bound_theorem_h}(Mcmullen, \cite[Lemma~2]{MM70})
Let $P$ be a simplicial $d$-polytope with $n$ vertices. Then
$$
h_i(P)\leq {n-d+i-1\choose i}
$$
for all $i$, $0\leq i\leq d$.
\end{thm}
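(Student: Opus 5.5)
I would follow McMullen's original route: prove a one-step recursive inequality between consecutive $h$-numbers, and then iterate it. The base case is $h_0(P)=1=\binom{n-d-1}{0}$. The target inequality is
\be
(i+1)\,h_{i+1}(P)\le (n-d+i)\,h_i(P),\qquad 0\le i\le d-1 .
\ee
Given this, induction on $i$ yields
$$h_{i+1}(P)\le \frac{n-d+i}{i+1}\binom{n-d+i-1}{i}=\binom{n-d+i}{i+1},$$
which is the claim.

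To obtain the recursion I would work with a shelling of the boundary complex of $P$, using the Bruggesser--Mani line shellings, which exist for every polytope. Recall the shelling description of the $h$-vector. If $F_1,\ldots,F_m$ is a shelling of the facets, and $r(F_j)$ is the number of vertices of $F_j$ whose removal gives a face lying in the earlier facets (the size of the minimal new face), then $h_i$ is the number of facets with $r(F_j)=i$. Two facts follow.

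\emph{Double counting over vertex links.} Summing over the $n$ vertices $v$, the standard face-counting identity (each $k$-face lies in exactly the links of its vertices) translates into
$$\sum_{v} h_i(\mathrm{lk}\,v)=(i+1)h_{i+1}(P)+(d-i)h_i(P).$$
Here $\mathrm{lk}\,v$ is a simplicial $(d-2)$-sphere, combinatorially the boundary of the vertex figure. I would verify this identity directly from $f$-vectors via the relation between $h$ and $f$.

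\emph{Comparison of $h$-numbers.} For each vertex $v$ one has $h_i(\mathrm{lk}\,v)\le h_i(P)$. To see this, choose a line shelling in which the star of $v$ is shelled first; for simplicial polytopes one can take the line through $v$ and an interior point, passing through $v$ first. The facets containing $v$ then come first, and their restriction sets coincide with those in the induced shelling of $\mathrm{lk}\,v$. Hence they contribute exactly $h_i(\mathrm{lk}\,v)$ to the count of facets with $r=i$, so $h_i(\mathrm{lk}\,v)\le h_i(P)$.

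Combining the two facts gives
$$(i+1)h_{i+1}+(d-i)h_i\le n\,h_i,$$
which is exactly the recursive inequality.

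The main obstacle is the comparison step. One has to check carefully that, for the facets of the star of $v$ shelled first, the restriction face in $P$ corresponds exactly to the restriction face in the link, so that the sizes match with index $i$ and are not shifted by one. Since $v$ belongs to none of the earlier facets' restrictions, the restriction face does not contain $v$, and the sizes should agree. If this bookkeeping proves awkward, I would instead argue algebraically. The Stanley--Reisner ring of $\partial P$ is Cohen--Macaulay, so dividing by a linear system of parameters $\theta_1,\ldots,\theta_d$ yields a graded algebra with Hilbert function $h_i$. This algebra is generated in degree $1$ by $n-d$ elements, so $h_i$ is at most the number of degree-$i$ monomials in $n-d$ variables, namely $\binom{n-d+i-1}{i}$.
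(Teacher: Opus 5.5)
Your argument is correct and is essentially McMullen's original proof of the cited Lemma~2; the paper gives no proof of its own, only the reference. As in McMullen, the vertex-link identity $\sum_v h_i(\mathrm{lk}\,v)=(i+1)h_{i+1}+(d-i)h_i$ and the bound $h_i(\mathrm{lk}\,v)\le h_i(P)$ give $(i+1)h_{i+1}\le (n-d+i)h_i$. The bound on the links comes from a shelling in which the star of $v$ comes first, and your bookkeeping is right: $v$ never lies in the minimal new face, so the restriction sizes are not shifted.

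The only imprecision is that a Bruggesser--Mani line must be in general position. It should pass through an interior point and a generic point just beyond $v$ (beyond exactly the facets containing $v$), not through $v$ itself, and simpliciality plays no role in this step. Your algebraic fallback, via an Artinian reduction of the Stanley--Reisner ring, is Stanley's alternative proof and is also valid.
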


If we start with a $d$-simplex, one can add
new vertices by building shallow pyramids over facets
to obtain a simplicial convex $d$-polytope with $n$ vertices, called the {\em stacked} polytope. Barnette described the $f$-vector of stacked polytopes and proved the Lower Bound Conjecture in \cite{Ba71} and \cite{Ba73}.
\begin{thm} \label{f_vector_stacked}
Let $P(n,d)$ be a stacked polytope with $n$ vertices. Then
$$
f_k(P(n,d))=\left\{\begin{array}{ll}
\displaystyle{d\choose k}n-{d+1\choose k+1}k,&\mbox{for }1\leq k\leq d-2;\\
(d-1)n-(d+1)(d-2),&\mbox{for }k=d-1.
\end{array}\right.
$$
\end{thm}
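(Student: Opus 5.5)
The plan is to prove the formula by induction on the number of vertices $n$, following the recursive definition of stacked polytopes: $P(n,d)$ is obtained from some $P(n-1,d)$ by building a shallow pyramid over one facet. Because of this, it suffices to check two things. First, that the formula holds for $n=d+1$. Second, that a single stacking operation increases $f_k$ by exactly $\binom{d}{k}$ for $1\le k\le d-2$, and increases $f_{d-1}$ by $d-1$. These are precisely the increments of the right-hand sides when $n$ increases by one.

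\textbf{Base case.} For $n=d+1$ the polytope is a $d$-simplex, so $f_k=\binom{d+1}{k+1}$. We compare this with the formula. For $1\le k\le d-2$,
\[
\binom{d}{k}(d+1)-\binom{d+1}{k+1}k=\binom{d+1}{k+1}(k+1)-\binom{d+1}{k+1}k=\binom{d+1}{k+1},
\]
using the identity $\binom{d}{k}(d+1)=\binom{d+1}{k+1}(k+1)$. For $k=d-1$ we get $(d-1)(d+1)-(d+1)(d-2)=d+1=\binom{d+1}{d}$. So the base case holds.

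\textbf{Inductive step.} Let $Q$ be a simplicial $d$-polytope and $F$ a facet of $Q$; $F$ is a $(d-1)$-simplex with $d$ vertices. Let $v$ be a point beyond $F$ and beneath every other facet of $Q$; this is what "shallow pyramid" means. Set $Q'=\mathrm{conv}(Q\cup\{v\})$. By the beneath--beyond lemma (Grünbaum), the faces of $Q'$ are of three kinds:
\begin{itemize}
\item the faces of $Q$ other than $F$ itself;
\item the new vertex $v$;
\item the pyramids $\mathrm{conv}(G\cup\{v\})$, where $G$ ranges over the nonempty proper faces of $F$.
\end{itemize}
Every proper face of $F$ survives in $Q'$, since it lies in a facet of $Q$ other than $F$, and $v$ is beneath that facet.

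\textbf{Counting.} A new $k$-face $\mathrm{conv}(G\cup\{v\})$ with $k\ge1$ comes from a $(k-1)$-face $G$ of $F$, that is, from a choice of $k$ of the $d$ vertices of $F$. Hence for $1\le k\le d-2$ exactly $\binom{d}{k}$ new $k$-faces appear and none disappear. For $k=d-1$ there are $\binom{d}{d-1}=d$ new facets, and the single facet $F$ is lost, giving a net gain of $d-1$. These increments match the formula, which completes the induction.

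The main obstacle is justifying this face-lattice description rigorously. The key points are that no face of $Q$ other than $F$ is destroyed, and that no other new faces arise. Both follow from the beneath--beyond lemma, since $v$ is beyond exactly one facet, namely $F$. I would cite that lemma directly rather than reprove it.
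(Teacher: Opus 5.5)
Your proof is correct and complete. The paper gives no argument for this statement: it quotes the formula from Barnette's work and uses it only as input to the lower bound in Corollary~\ref{lower_bound_volume2}. Your induction on $n$ via the beneath--beyond lemma is the standard derivation of the formula. What it adds is self-containment. The base-case identity $\binom{d}{k}(d+1)=\binom{d+1}{k+1}(k+1)$, together with the per-step increments $\binom{d}{k}$ for $1\le k\le d-2$ and $d-1$ for $k=d-1$, reproduces the right-hand sides exactly.

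One step you leave implicit is that every intermediate stacked polytope is simplicial. You need this so that the facet $F$ you stack on is a $(d-1)$-simplex with $d$ vertices. It follows at once from your own face description: each new face $\mathrm{conv}(G\cup\{v\})$ is a pyramid over a simplex $G$, and $v\notin\mathrm{aff}\,G$.

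You could also say explicitly why case (a) of the beneath--beyond lemma ($v\in\mathrm{aff}\,G$) never produces a proper face here. Every proper face $G$ of $Q$ lies in some facet, and $v$ lies strictly off every facet hyperplane of $Q$: strictly beyond $F$ and strictly beneath the others.
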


\begin{thm} (Lower Bound Theorem, Barnette, 1973)  \label{Lower_Bound_Theorem}
Let $P$ be a simplicial $d$-polytope with $n$ vertices and $P(n,d)$ be a stacked $d$-polytope. 
Then $f_j(P)\geq f_j(P(n,d))$ for all $j$, $1\leq j\leq d-1$.
\end{thm}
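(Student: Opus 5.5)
The statement to prove is the Lower Bound Theorem (Theorem~\ref{Lower_Bound_Theorem}). The plan is to follow the rigidity approach of Kalai rather than Barnette's original argument. Since the $f$-vector of a simplicial polytope is a linear function of its $h$-vector, it suffices to prove $h_2\ge h_1$, i.e.\ $g_2:=h_2-h_1\ge 0$; afterwards we transfer the inequality from $f_1$ to the higher $f_j$.

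\emph{Step 1: the edge inequality.} For a simplicial $d$-polytope with $n$ vertices, $f_0=n$ and $f_1=h_2+(d-1)h_1+\binom{d}{2}$ with $h_1=n-d$. Hence $f_1\ge dn-\binom{d+1}{2}$ is equivalent to $g_2\ge 0$. I would prove this via infinitesimal rigidity. The graph of $P$ (its $1$-skeleton), realized in $\R^d$ by its vertex positions, is infinitesimally rigid; this is Whiteley's extension of the Cauchy--Dehn theorem to simplicial polytopes, proved by induction on $d$ through the rigidity of the vertex links. An infinitesimally rigid framework on $n$ points in general enough position in $\R^d$ has a rigidity matrix of rank $dn-\binom{d+1}{2}$. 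The number of its rows, which is $f_1$, is at least this rank. This step, establishing rigidity, is the main obstacle. The cleanest route is to invoke Whiteley's theorem together with the gluing lemma for rigid frameworks sharing $d$ affinely independent vertices.

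\emph{Step 2: from edges to higher faces.} I would apply the edge inequality to every vertex figure. The link of a vertex $v$ is a simplicial $(d-1)$-polytope with $\deg v$ vertices, so its edge count, which equals the number of $2$-faces containing $v$, satisfies the $(d-1)$-dimensional bound. Summing over $v$ and using $\sum_v \deg v=2f_1$ gives a lower bound on $f_2$ in terms of $f_1$ and $n$. Iterating this through the link structure, in the style of McMullen--Perles--Walkup, yields the inequality $h_{i}\ge h_{i-1}$ only for $i=2$. For the general $f_j$, however, it is enough to know that $f_j$ is a nonnegative combination of $h_0,h_1,h_2$ and the $g_i$ \emph{with nonnegative coefficients on $g_2$ once the higher $g_i$ are accounted for}. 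That approach needs the full $g$-theorem, so I avoid it. Instead I use the counting identity
\[
(j+1)f_j=\sum_v f_{j-1}(\mathrm{lk}\,v)
\]
and induct on $d$, applying the inductive hypothesis to each link. This gives
\[
(j+1)f_j\ge \sum_v\Bigl[\tbinom{d-1}{j-1}\deg v-\tbinom{d}{j}(j-1)\Bigr]=2\tbinom{d-1}{j-1}f_1-n\tbinom{d}{j}(j-1).
\]

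\emph{Step 3: combine.} Substituting $f_1\ge dn-\binom{d+1}{2}$ from Step 1, the right-hand side becomes a linear expression in $n$. A routine binomial identity checks that it equals $(j+1)$ times the stacked value from Theorem~\ref{f_vector_stacked}. For $j=d-1$ one can instead use the Dehn--Sommerville relation $f_{d-1}=\sum_i h_i$, together with $h_i=h_{d-i}$ and $h_i\ge h_1$ for $1\le i\le d-1$, which also follows from the rigidity argument applied to links. This gives $f_{d-1}\ge 2+(d-1)(n-d)=(d-1)n-(d+1)(d-2)$.
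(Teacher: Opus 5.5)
The paper gives no proof of this statement; it simply quotes Barnette's theorem. Your route is rigidity (Whiteley/Kalai) for the edge bound, then induction on $d$ through $(j+1)f_j=\sum_v f_{j-1}(\mathrm{lk}\,v)$. That is a legitimate alternative, and for $1\le j\le d-2$ it goes through. The coefficient of $f_1$ is positive, and your final identity holds because $2d{d-1\choose j-1}-(j-1){d\choose j}=(j+1){d\choose j}$ and $2{d-1\choose j-1}{d+1\choose 2}=(j+1)j{d+1\choose j+1}$. (Step~1 needs $d\ge 3$, since an $n$-gon has fewer than $2n-3$ edges. For $d=2$ only $j=d-1$ occurs, and that case is trivial.)

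\textbf{The gap is the facet case $j=d-1$.} You claim that $h_i\ge h_1$ for $1\le i\le d-1$ ``also follows from the rigidity argument applied to links'', but it does not.
Rigidity gives $g_2\ge 0$ for $P$ and for each link. Summing over vertices with the standard identity $\sum_v h_i(\mathrm{lk}\,v)=(i+1)h_{i+1}+(d-i)h_i$ yields only $3g_3+(d-1)g_2\ge 0$. Once $d\ge 6$, this does not force $h_3-h_1=g_2+g_3\ge 0$. For instance, the symmetric vector $(1,10,20,5,20,10,1)$ has $g_2\ge 0$ and satisfies that inequality, yet $h_3<h_1$.

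The inequality $h_i\ge h_1$ really comes from $h_1\le h_2\le\cdots\le h_{\lfloor d/2\rfloor}$, i.e.\ from the $g$-theorem you set out to avoid. If you allow the $g$-theorem, the whole statement is immediate, since $f_j(P)-f_j(P(n,d))=\sum_{i=2}^{d-1}{d-i\choose j+1-i}(h_i-h_1)$.

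The repair is elementary. Every ridge of a simplicial $d$-polytope lies in exactly two facets, so $d\,f_{d-1}=2f_{d-2}$. The case $j=d-2$ you have already proved (for $d=3$ this is Step~1) then gives
\[
f_{d-1}\ge \frac{2}{d}\Bigl[{d\choose 2}n-{d+1\choose 2}(d-2)\Bigr]=(d-1)n-(d+1)(d-2).
\]
Alternatively, run your link induction for $j=d-1$ using the stacked facet count $(d-2)m-d(d-3)$ for the $(d-1)$-dimensional links. This gives the same bound.
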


The famous $g$-Theorem characterizes completely the $h$-vector of a simplicial polytopes. This result was conjectured by McMullen and proved by Billera and Lee in  \cite{BL80},   \cite{BL81}  and Stanley in \cite{Stan75}. The following Theorem is an immediate consequence of  the $g$-Theorem.
\begin{thm} \label{g_theorem}
If a vector $(h_0,\ldots ,h_d)\in \N^{d+1}$ is the $h$-vector of a simplicial polytope, then
\begin{itemize}
\item[(i)] $h_i=h_{d-i}$ for all $i$, $0\leq i\leq d$ 
\item[(ii)] $h_0=1$, and $h_i\leq h_{i+1}$ for all $i$, $0\leq i\leq d/2-1$.
\end{itemize}
\end{thm}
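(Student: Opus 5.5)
The statement is the necessity direction of the $g$-Theorem with everything except nonnegativity of the $g$-vector stripped away. So I plan to reduce it to that theorem plus the Dehn--Sommerville relations, rather than reprove anything deep. Throughout I use the standard definition of the $h$-vector of a simplicial $d$-polytope $P$:
$$\sum_{i=0}^d h_i t^{d-i}=\sum_{i=0}^{d} f_{i-1}(t-1)^{d-i},\qquad f_{-1}:=1 .$$

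\emph{$h_0=1$.} Comparing the coefficients of $t^d$ gives $h_0=f_{-1}=1$ immediately.

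\emph{Symmetry (i).} These are the Dehn--Sommerville equations. I would prove them by shelling. By Bruggesser--Mani, the boundary complex of $P$ admits a line shelling $F_1,\dots,F_m$ of its facets. For each $F_j$, let $R(F_j)$ be its restriction face, i.e.\ the minimal new face added at step $j$. Expanding the face counts along the shelling shows that $h_i$ equals the number of indices $j$ with $\abs{R(F_j)}=i$. Running the line through $P$ in the opposite direction gives the reversed order $F_m,\dots,F_1$, which is again a shelling. In the reversed shelling the restriction face of $F_j$ is the complement of $R(F_j)$ in the vertex set of $F_j$, so it has size $d-i$. Counting facets in the two shellings then gives $h_i=h_{d-i}$.

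\emph{Unimodality (ii).} Put $g_i:=h_i-h_{i-1}$ for $1\le i\le \floor{d/2}$. The necessity part of the $g$-Theorem (Stanley \cite{Stan75}) states that $(1,g_1,\dots,g_{\floor{d/2}})$ is an $M$-sequence. In particular every $g_i\ge 0$, which is exactly $h_{i-1}\le h_i$ for $1\le i\le d/2$, equivalently $h_i\le h_{i+1}$ for $0\le i\le d/2-1$.

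For completeness, the structure of Stanley's argument is as follows.
\begin{itemize}
\item Perturb the vertices of $P$ to rational points. Since $P$ is simplicial, small perturbations preserve the combinatorial type, so the $h$-vector is unchanged.
\item Form the projective toric variety $X$ of the resulting rational polytope. Its rational cohomology ring has even Betti numbers $\dim H^{2i}(X;\mathbb{Q})=h_i$.
\item The hard Lefschetz theorem gives an ample class $\omega$ such that multiplication by $\omega$ is injective $H^{2i-2}\to H^{2i}$ for $i\le d/2$. This forces $h_{i-1}\le h_i$.
\end{itemize}

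The main obstacle is this last step. Hard Lefschetz for the (only rationally smooth) toric variety is the genuinely deep input, and I would cite \cite{Stan75} for it rather than reprove it. Alternatively one could cite Adiprasito's combinatorial Lefschetz theorem or McMullen's polytope algebra. The remaining steps are routine bookkeeping, since, as the paper notes, both claims are immediate consequences of the $g$-Theorem.
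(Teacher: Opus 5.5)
Your proposal is correct and takes essentially the paper's route. The paper gives no argument beyond calling the statement an immediate consequence of the $g$-Theorem; you do the same for (ii), supply McMullen's standard Bruggesser--Mani line-shelling proof of the Dehn--Sommerville relations for (i), and read $h_0=f_{-1}=1$ off the definition. One small correction to your optional sketch of Stanley's argument: the variety with $\dim H^{2i}(X;\mathbb{Q})=h_i(P)$ is the one defined by the fan of cones over the proper faces of $P$ (origin in the interior), i.e.\ the toric variety of the simple polar polytope. It is not the one given by the normal fan of $P$, which is generally not simplicial.
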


Theorem \ref{g_theorem} (i) is known as Dehn-Sommerville equations.

\section{Reflexive polytopes}

\begin{definition}\label{def:boundary_triangulation}
Let $P$ be a $d$-dimensional reflexive polytope. Set $V:=\partial P\cap\Z^d$. We say that a simplicial complex $\Delta$ is a \emph{triangulation of $\partial P$ with vertex set $V$} if the following conditions are satisfied:
\begin{itemize}
\item[(i)] If $\sigma,\tau\in\Delta$, then $\sigma\cap\tau$ is a common face of both $\sigma$ and $\tau$;
\item[(ii)] $\bigcup_{\sigma\in\Delta}\sigma=\partial P$.
\end{itemize}
\end{definition}

The boundary triangulation of any smooth Fano polytope is uni-modular, and any reflexive polytope $P$, when dilated by a large enough factor, possess a uni-modular triangulation. More specifically let $\Delta$ be any triangulation of the boundary of $P$, and let $\det{\sigma}$ be the index of the sublattice generated by a simplex $\sigma$ in $\Delta$. If $f:=\lcm{\det{\sigma}\mid\sigma\in\Delta}$, then $fP$ possesses a unimodular triangulation.
The following results are  well-known.
 
\begin{prop}\label{prop:exists_unimodular_delta}
Let $P$ be a $d$-dimensional reflexive polytope. 
The triangulation $\Delta_{\partial P}$ given by the boundary complex of $P$ is a triangulation of $\partial P$ with vertex set $V$.
\end{prop}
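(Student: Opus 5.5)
The statement is best read as saying that the facial structure of $\partial P$ can be refined to a triangulation of $\partial P$ with vertex set $V$ in the sense of Definition~\ref{def:boundary_triangulation}. The boundary complex itself is a simplicial complex with vertex set $V$ only when $P$ is simplicial and has no lattice points on $\partial P$ other than its vertices, as for a smooth Fano polytope. In that case both conditions of Definition~\ref{def:boundary_triangulation} are immediate: faces of a polytope intersect in common faces, and the facets cover $\partial P$. My plan therefore handles the general case by constructing a triangulation that subdivides every face of $P$ and uses every point of $V$ as a vertex.

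\textbf{Construction.} Choose a single global height function $\omega:V\to\R$ of the form $\omega(v)=\left<v,v\right>+\varepsilon(v)$, where the perturbations $\varepsilon(v)$ are tiny and generic. For every face $F$ of $P$, lift the points of $F\cap\Z^d=F\cap V$ to $(v,\omega(v))$. Then project the lower faces of the convex hull of the lifted points back to $F$. This gives the regular subdivision $\Delta_F$ of $F$ induced by $\omega$. Let $\Delta$ be the union of all the $\Delta_F$ over the facets $F$ of $P$.

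\textbf{Key steps, in order.}
\begin{itemize}
\item[(a)] Every lattice point of $F$ is a vertex of $\Delta_F$. The unperturbed function $v\mapsto\left<v,v\right>$ is strictly convex, so each lifted point lies strictly below the lower hull of the other lifted points. A small perturbation preserves this strict inequality. Hence the vertex set of $\Delta$ is all of $V$.
\item[(b)] Each cell of $\Delta_F$ is a simplex. This holds because genericity of $\varepsilon$ prevents $\dim F+2$ lifted points from lying on a common non-vertical hyperplane. Since $V$ is finite, only finitely many such coincidences must be avoided.
\item[(c)] Compatibility with faces. If $G$ is a face of $F$, then $\Delta_F$ restricted to $G$ equals $\Delta_G$. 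The reason is that the lifted configuration over $G$ is a face of the lifted configuration over $F$ (cut out by the same supporting hyperplane, extended vertically). So lower faces over $F$ that lie in $G$ are exactly the lower faces over $G$.
\item[(d)] Condition (i) of Definition~\ref{def:boundary_triangulation}. Take simplices $\sigma\in\Delta_F$ and $\tau\in\Delta_{F'}$. Then $\sigma\cap\tau\subseteq F\cap F'=G$, which is a face of $P$. By (c), $\sigma\cap G$ and $\tau\cap G$ are simplices of the single triangulation $\Delta_G$. Their intersection is therefore a common face, and it is a face of both $\sigma$ and $\tau$.
\item[(e)] Condition (ii). The union of the $\Delta_F$ is the union of the facets, which is $\partial P$.
\end{itemize}

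\textbf{Main obstacle.} I expect step (c), the global consistency across different facets, to be the hardest. Independently chosen triangulations of two adjacent facets need not agree on their common ridge. The reason for using one global $\omega$, rather than facetwise choices, is exactly to force this agreement. I would verify it by the face-of-a-face argument for lifted polytopes in (c). Reflexivity is not really needed for the construction, except to ensure $0\in P^\circ$, so that $\partial P$ is a genuine sphere-like boundary and the later cone-over-$0$ arguments apply.
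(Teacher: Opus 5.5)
The paper gives no proof of this proposition; it is simply listed as ``well-known''. Read literally, the statement is false whenever $P$ is non-simplicial or has boundary lattice points that are not vertices. For the reflexive square $[-1,1]^2$ the boundary complex has $4$ vertices while $\abs{V}=8$, and $[-1,1]^3$ has square facets. You diagnose this correctly, and the existence version you prove is exactly what the paper actually uses. Corollary~\ref{lower_bound_volume} needs only \emph{some} triangulation of $\partial P$ with vertex set $\partial P\cap\Z^d$ in order to apply Proposition~\ref{prop:Hibi_compressed_delta}. The proof of Theorem~\ref{prop:smooth_equivalences}, (i)$\Rightarrow$(ii), uses the boundary complex only for smooth $P$, where the literal reading holds. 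For that case you should add the one-line reason: if the vertices $v_1,\dots,v_d$ of a facet form a $\Z$-basis, a lattice point $\sum a_iv_i$ of the facet has $a_i\in\Z_{\ge 0}$ and $\sum a_i=1$, so it is a vertex. Your construction is correct and is the standard argument. It uses one generic perturbation of the Delaunay height $\left<v,v\right>$ on every face, and consistency across facets comes from the fact that the lower faces of a vertical face of the lifted polytope are exactly the lower faces of the whole lifted polytope lying in it. It also makes clear that reflexivity is irrelevant for existence; it only enters later, when Hibi's argument cones $\Delta$ from the origin. Two small precision points. The ``strictly below'' argument in (a) applies only to lattice points in the convex hull of the other lattice points of $F$; vertices of $F$ are vertices of $\Delta_F$ trivially. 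And $\partial P$ is a sphere for every full-dimensional polytope, so $0\in P^\circ$ is not needed for that. A slightly shorter equivalent route is to take one regular triangulation of the whole configuration $P\cap\Z^d$ with this height function and restrict it to $\partial P$; your step (c) is precisely the fact that makes that restriction work.
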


\begin{prop}[\protect{\cite[Proposition~2.2]{Hib91}}]\label{prop:Hibi_compressed_delta}
Let $P$ be a $d$-dimensional reflexive polytope. Suppose that $\Delta$ is a triangulation of the boundary of $P$ with the vertex set $\partial P\cap\Z^d$. Then $h_i(\Delta)\leq\delta_i(P)$ for every $0\leq i\leq d$. Moreover, $h_\Delta=\delta_P$ iff $\Delta$ is unimodular.
\end{prop}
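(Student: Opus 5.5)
The statement to be proved is Proposition~\ref{prop:Hibi_compressed_delta}: for a triangulation $\Delta$ of $\partial P$ with vertex set $\partial P\cap\Z^d$, we have $h_i(\Delta)\le\delta_i(P)$ for all $i$, with equality throughout iff $\Delta$ is unimodular. The plan is to cone $\Delta$ from the origin and compute $\mathrm{Ehr}_P$ simplex by simplex via the usual half-open decomposition. Since $0\in P^\circ$, the cones $\cone{\sigma}$, $\sigma\in\Delta$, form a complete simplicial fan. The simplices $\mathrm{conv}(0,\sigma)$, $\sigma$ a facet of $\Delta$, triangulate $P$.

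For a facet $\sigma$ with vertices $v_1,\dots,v_d$, reflexivity forces every $v_j$ to lie on the hyperplane $\langle u,x\rangle=1$, where $u$ is the (lattice) facet normal of the facet of $P$ containing $\sigma$. So homogenize to $(v_j,1)$ and $(0,1)$ in $\Z^{d+1}$. The lattice points of the cone over $\mathrm{conv}(0,\sigma)$ are then counted by the fundamental parallelepiped
$$\Pi_\sigma=\Big\{\lambda_0(0,1)+\sum_j\lambda_j(v_j,1):0\le\lambda<1\Big\}.$$
Lattice points $(x,m)$ of $\Pi_\sigma$ have height $m=\lambda_0+\sum\lambda_j$. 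Because $\langle u,x\rangle=\sum\lambda_j$ is an integer, and $\lambda_0<1$ while $m$ is an integer, we get $\lambda_0=0$. Hence the points of $\Pi_\sigma$ correspond exactly to the points of the $d$-dimensional parallelepiped of $\sigma$ in $\cone{\sigma}$, graded by height $\langle u,x\rangle$. In particular $\Pi_\sigma$ contains only the origin iff $\det\sigma=1$.

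Next I use a half-open decomposition, following Stanley's method. Choose a generic point $q$ and take each closed cone minus the facets visible from $q$. This writes $\R^d$ (and the cone over $P$) as a disjoint union of half-open cones $\cone{\sigma}\setminus(\text{faces opposite a set }S_\sigma\text{ of generators})$. The shelling/visibility count gives $\sum_\sigma t^{|S_\sigma|}=h_\Delta(t)$. This identity is the standard fact that the $h$-polynomial of a complete simplicial fan equals this generating function for a generic line-shelling type decomposition. Then
$$\delta_P(t)=\sum_{\sigma}\ \sum_{x\in\Pi'_\sigma}t^{\mathrm{ht}(x)},$$
where $\Pi'_\sigma$ is the half-open parallelepiped shifted by the generators in $S_\sigma$. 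Its minimal element is $\sum_{j\in S_\sigma}v_j$, of height $|S_\sigma|$, so each $\sigma$ contributes $t^{|S_\sigma|}+(\text{nonnegative terms})$. Summing gives $\delta_P(t)-h_\Delta(t)$ with nonnegative coefficients, which is the inequality.

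Equality of all coefficients holds iff every $\Pi_\sigma$ has exactly one lattice point, i.e.\ $\det\sigma=1$ for all facets, i.e.\ $\Delta$ is unimodular. The converse direction is immediate from the same formula. The main obstacle is justifying $\sum_\sigma t^{|S_\sigma|}=h_\Delta(t)$ for an arbitrary triangulation $\Delta$ of the sphere $\partial P$, which need not be regular or shellable. I would handle it via the Betke--McMullen argument: each face $\tau\in\Delta$ appears as the "closed part" of exactly those half-open cones in which it is counted. Inclusion–exclusion over faces then yields $\sum_\tau t^{\dim\tau+1}(1-t)^{d-\dim\tau-1}=h_\Delta(t)$. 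This needs only genericity of $q$ and completeness of the fan, not shellability.
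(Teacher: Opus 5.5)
The paper gives no proof of this proposition. It is quoted from Hibi and used as a black box in Corollaries~\ref{cor:reflexive_simplicial_h_delta} and~\ref{lower_bound_volume} and in (i)$\Rightarrow$(ii) of Theorem~\ref{prop:smooth_equivalences}. Your argument therefore replaces the citation rather than competing with an internal proof, and it is correct.

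Its two key points are right:
\begin{itemize}
\item \emph{Reflexivity kills the apex coordinate.} This is where reflexivity is used: the facet normal $u_\sigma$ is integral and $\langle u_\sigma,v_j\rangle=1$. Hence the coefficient of $(0,1)$ vanishes at every lattice point of the parallelepiped of $\mathrm{conv}(0,\sigma)$. So that parallelepiped has exactly $\det{\sigma}$ lattice points, graded by $\langle u_\sigma,x\rangle$.
\item \emph{One distinguished point per facet.} The half-open decomposition assigns each facet the point $\sum_{j\in S_\sigma}(v_j,1)$, of height $|S_\sigma|$.
\end{itemize}

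Compared with quoting Hibi, your route is self-contained and elementary, and it gives slightly more. First, it yields the explicit formula $\delta_P(t)-h_\Delta(t)=\sum_\sigma\sum_p t^{\mathrm{ht}(p)}$, where $p$ runs over the $\det{\sigma}-1$ lattice points of $\Pi'_\sigma$ other than $\sum_{j\in S_\sigma}(v_j,1)$. The equality case then follows by evaluating at $t=1$. Second, it never uses that the vertex set is all of $\partial P\cap\Z^d$, so the statement holds for every lattice triangulation of $\partial P$.

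\textbf{The step to tighten.} You leave $\sum_\sigma t^{|S_\sigma|}=h_\Delta(t)$ loose; the formula $\sum_\tau t^{|\tau|}(1-t)^{d-|\tau|}$ you end with is just the definition of $h_\Delta$. A clean argument:
\begin{itemize}
\item The half-open cone $H_\sigma=\{\sum_j\lambda_jv_j:\lambda\ge 0,\ \lambda_j>0\mbox{ for }j\in S_\sigma\}$ is the disjoint union of the cones $\mathrm{relint}\,\cone{\tau}$ with $S_\sigma\subseteq\tau\subseteq\sigma$.
\item For $\tau\not\subseteq\sigma$, the set $\mathrm{relint}\,\cone{\tau}$ misses $\cone{\sigma}$, by the fan property.
\item Both $\{H_\sigma\}$ and $\{\mathrm{relint}\,\cone{\tau}\}_{\tau\in\Delta}$ (with $\cone{\emptyset}=\{0\}$) partition $\R^d$. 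Hence the faces of $\Delta$ split into the Boolean intervals $[S_\sigma,\sigma]$.
\item This gives $\sum_{\tau\in\Delta}x^{|\tau|}=\sum_\sigma x^{|S_\sigma|}(1+x)^{d-|S_\sigma|}$. Substitute $x=t/(1-t)$ and multiply by $(1-t)^d$.
\end{itemize}
This needs only that $q$ avoid the linear spans of the ridges of $\Delta$. No shellability or regularity is required, as you said.

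\textbf{Smaller points to state explicitly.}
\begin{itemize}
\item Each $\sigma$ lies in a single facet of $P$. The functional of a facet containing a relative interior point of $\sigma$ is maximal on $\sigma$ there, hence constant on $\sigma$.
\item The decomposition of $\R^d$ lifts to the cone over $P$, because $m-\langle u_\sigma,x\rangle\ge 0$ for $x\in mP$.
\item The argument forcing the $(0,1)$-coefficient to vanish applies verbatim to the shifted parallelepiped $\Pi'_\sigma$.
\item $\sum_{j\in S_\sigma}(v_j,1)$ need not have minimal height in $\Pi'_\sigma$. You only use that it belongs to $\Pi'_\sigma$.
\end{itemize}
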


As an immediate corollary, one obtains the following relationship between the $h$-vector and 
the $\delta$-vector of a reflexive simplicial polytope:

\begin{cor}\label{cor:reflexive_simplicial_h_delta}
Let $P$ be a $d$-dimensional reflexive simplicial polytope. 
Then $h_i(P)\leq\delta_i(P)$ for every $0\leq i\leq d$.
\end{cor}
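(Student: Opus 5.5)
The plan is to apply Proposition~\ref{prop:Hibi_compressed_delta} to the particular triangulation $\Delta_{\partial P}$ supplied by Proposition~\ref{prop:exists_unimodular_delta}. Since $P$ is simplicial, its boundary complex is a simplicial complex. It satisfies conditions (i) and (ii) of Definition~\ref{def:boundary_triangulation}, because faces of a polytope meet in common faces and the facets cover $\partial P$. The $h$-vector of this complex is by definition $h(P)$. The inequality $h_i(P)=h_i(\Delta_{\partial P})\le\delta_i(P)$ then follows for every $0\le i\le d$.

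The only point that needs checking is the vertex-set hypothesis of Proposition~\ref{prop:Hibi_compressed_delta}. That proposition asks for a triangulation whose vertex set is $\partial P\cap\Z^d$, while the boundary complex has only the vertices of $P$ as its vertices. I would handle this in one of two ways. The first option is to note that Proposition~\ref{prop:exists_unimodular_delta} already asserts that $\Delta_{\partial P}$ is a triangulation of $\partial P$ with vertex set $V$, in the sense of Definition~\ref{def:boundary_triangulation}. That definition only imposes conditions (i) and (ii), so we may use it as stated. The second option, if one wants more care, is to observe that Hibi's argument only uses the fact that every simplex of $\Delta$ lies in a facet of $P$ at lattice height $1$. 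Indeed, $P$ is reflexive, so every facet has the form $\langle u,x\rangle=1$ with $u\in\Z^d$. Hibi's argument then compares the lattice points of $mP$ with those counted in the cones over the simplices of $\Delta$. Since $\delta_P$ counts all lattice points, it is at least the count obtained from the fundamental parallelepipeds restricted to their $h$-contributions, and this gives the inequality for any lattice triangulation of the boundary whose simplices have vertices in $\partial P\cap\Z^d$.

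The main obstacle is therefore just justifying applicability: the boundary complex need not use every boundary lattice point as a vertex. I would argue that the proof of Proposition~\ref{prop:Hibi_compressed_delta} does not require every lattice point to be a vertex. Each cone over a simplex $\sigma$ contributes $\sum_{x\in\Pi_\sigma}t^{\mathrm{ht}(x)}$ to $\delta_P(t)$, where $\Pi_\sigma$ is the half-open fundamental parallelepiped. The point $0\in\Pi_\sigma$, taken over the half-open decomposition, contributes exactly $h_\Delta(t)$, and the extra parallelepiped points contribute only nonnegative terms. Comparing coefficients gives $h_i(P)\le\delta_i(P)$.
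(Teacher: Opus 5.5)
Your proposal is correct and follows the paper's route exactly: the corollary comes from applying Proposition~\ref{prop:Hibi_compressed_delta} to the boundary complex $\Delta_{\partial P}$ supplied by Proposition~\ref{prop:exists_unimodular_delta}, using $h(\Delta_{\partial P})=h(P)$. Your concern about boundary lattice points that are not vertices is legitimate, and the paper handles it only through the permissive Definition~\ref{def:boundary_triangulation}. Your backup half-open-decomposition argument is sound once the decomposition comes from a shelling of $\partial P$: there, the distinguished parallelepiped point of each facet $F$ is $\sum_{v\in R(F)}v$ of height $|R(F)|$ (not $0$), and these points together contribute $h_\Delta(t)$.
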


\begin{cor} \label{lower_bound_volume}
Let $P$ be a $d$-dimensional reflexive polytope. Then $f_{d-1}(P)\leq d!\vol{P}$.
\end{cor}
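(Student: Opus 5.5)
The plan is to bound $d!\,\vol{P}$ from below by the number of top-dimensional simplices in a suitable triangulation of $\partial P$. Take a triangulation $\Delta$ of $\partial P$ whose vertex set is $V=\partial P\cap\Z^d$. Since $P$ is reflexive, $0$ is its unique interior lattice point, and every facet $F$ of $P$ lies in a hyperplane $\langle u_F,x\rangle=1$ with $u_F\in\Z^d$. First refine the boundary complex of $P$ into a triangulation of each facet, using all lattice points of the facet, and do this compatibly on common faces. Such a $\Delta$ exists, for instance via a placing (pulling) triangulation that adds the points of $V$ one at a time.

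For each $(d-1)$-simplex $\sigma\in\Delta$, form the cone $\mathrm{conv}(\{0\}\cup\sigma)$. These $d$-simplices triangulate $P$. Each one has lattice height $1$ over $\sigma$ and is a nondegenerate lattice simplex, so its normalized volume $d!\,\vol{\mathrm{conv}(0,\sigma)}=\det{\sigma}$ is a positive integer, hence at least $1$. Summing over all these simplices gives
$$d!\,\vol{P}=\sum_{\sigma}\det{\sigma}\geq f_{d-1}(\Delta).$$

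It remains to compare $f_{d-1}(\Delta)$ with $f_{d-1}(P)$. Each facet of $P$ is triangulated by at least one $(d-1)$-simplex, and distinct facets contribute disjoint sets of simplices. Hence $f_{d-1}(\Delta)\geq f_{d-1}(P)$, which yields the claim.

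Alternatively, when $P$ is simplicial one can shortcut the argument through Corollary~\ref{cor:reflexive_simplicial_h_delta}. Summing the inequalities $h_i(P)\leq\delta_i(P)$ gives $f_{d-1}(P)=\sum_i h_i\leq\sum_i\delta_i=d!\,\vol{P}$ by (\ref{eq:volume_from_delta}). The general statement, however, needs the cone decomposition above.

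The only delicate step is justifying that the cone decomposition is a genuine triangulation of $P$, that is, that the simplices $\mathrm{conv}(0,\sigma)$ cover $P$ with disjoint interiors. This follows because $0\in P^\circ$ and every ray from $0$ meets $\partial P$ exactly once. Integrality of the normalized volumes is immediate because every vertex involved is a lattice point.
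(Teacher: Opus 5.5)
Your proof is correct, but it takes a different route from the paper. The paper also starts from a triangulation $\Delta$ of $\partial P$ with vertex set $\partial P\cap\Z^d$ and uses $f_{d-1}(P)\le f_{d-1}(\Delta)$. It then bounds $f_{d-1}(\Delta)=\sum_i h_i(\Delta)$ by citing Hibi's coefficientwise inequality $h_i(\Delta)\le\delta_i(P)$ (Proposition~\ref{prop:Hibi_compressed_delta}) and summing via (\ref{eq:volume_from_delta}).

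You replace that citation with an elementary observation: the cones $\mathrm{conv}(\{0\}\cup\sigma)$ tile $P$, and each has normalized volume $\det{\sigma}\ge 1$. This makes your version self-contained and more general. It does not actually need the height-one property, nor that $\Delta$ uses every point of $\partial P\cap\Z^d$, nor even compatibility of the facet triangulations across common faces. So it proves $f_{d-1}(P)\le d!\,\vol{P}$ for any lattice polytope with a lattice point in its interior. It is also exactly the computation the paper itself performs in the step (iii)~$\Rightarrow$~(i) of Theorem~\ref{prop:smooth_equivalences}. It gives the equality case directly as well: equality holds if and only if $P$ is simplicial and every facet $F$ has $\det{F}=1$.

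The paper's route buys the finer graded information $h_i(\Delta)\le\delta_i(P)$, which is what drives its $h$-vector characterization of smooth Fano polytopes.

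One small correction to your side remark. The non-simplicial case does not require your cone decomposition. Proposition~\ref{prop:Hibi_compressed_delta} applies to any boundary triangulation $\Delta$ of a reflexive $P$, and that is how the paper avoids the simplicial hypothesis of Corollary~\ref{cor:reflexive_simplicial_h_delta}.
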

\proof 

Let $\Delta$ denote a triangulation of the boundary of $P$ with the vertex set $\partial P\cap\Z^d$. Clearly $f_{d-1}(P)\leq f_{d-1}(\Delta)$. Hence 
by Proposition \ref{prop:Hibi_compressed_delta}
$$
f_{d-1}(\Delta)=\sum_{i=0}^d h_i(\Delta)\leq \sum_{i=0}^d \delta_i(P)= d!\vol{P}.
$$
\qed
\begin{cor} \label{lower_bound_volume2}
Let $P$ be a $d$-dimensional reflexive polytope with $n:=\abs{\partial P\cap\Z^d}$. 
Then $(d-1)n-(d+1)(d-2)\leq d!\vol{P}$.
\end{cor}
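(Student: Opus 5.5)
The plan is to combine the argument of Corollary~\ref{lower_bound_volume} with the Lower Bound Theorem, applied not to $P$ itself but to a boundary triangulation that uses every boundary lattice point as a vertex. Let $\Delta$ be a triangulation of $\partial P$ with vertex set $V=\partial P\cap\Z^d$. I take it to be one in which every point of $V$ is actually a vertex; for example, the regular subdivision obtained by pulling or placing the boundary lattice points one at a time. Then $\Delta$ is a simplicial complex with exactly $n$ vertices.

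\textbf{Step 1: write $\Delta$ as the boundary of a simplicial polytope.} Since $0\in P^\circ$, coning $\Delta$ from the origin gives a complete simplicial fan. If $\Delta$ is chosen as a regular (coherent) triangulation of $\partial P$, this fan is the normal fan of a polytope. Equivalently, we can slightly push each point of $V$ outward radially by a generic convex height. The resulting points are in convex position, and their convex hull $Q$ is a simplicial $d$-polytope whose boundary complex is combinatorially $\Delta$ and which has $n$ vertices. Pulling triangulations are regular, and regularity is exactly what this step needs.

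\textbf{Step 2: apply the Lower Bound Theorem.} By Theorem~\ref{Lower_Bound_Theorem} and Theorem~\ref{f_vector_stacked},
\[
f_{d-1}(\Delta)=f_{d-1}(Q)\geq (d-1)n-(d+1)(d-2).
\]

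\textbf{Step 3: compare with the volume.} As in the proof of Corollary~\ref{lower_bound_volume}, Proposition~\ref{prop:Hibi_compressed_delta} gives
\[
f_{d-1}(\Delta)=\sum_i h_i(\Delta)\leq \sum_i \delta_i(P)=d!\,\vol{P},
\]
and chaining this with Step 2 proves the statement.

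The main obstacle is Step 1: ensuring that the boundary triangulation is polytopal, so that Barnette's theorem, stated for polytopes, applies. If regularity is awkward to justify, there is a fallback. The Lower Bound Theorem is known to hold for simplicial spheres, by Kalai's rigidity proof, and $\Delta$ is a triangulated $(d-1)$-sphere because it triangulates $\partial P$. One could invoke that version instead.
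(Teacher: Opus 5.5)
Your proof is correct. It also differs from the paper's in the one place that matters.

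\textbf{What the paper does.} Its two-line proof chains the \emph{conclusion} $f_{d-1}(P)\le d!\,\vol{P}$ of Corollary~\ref{lower_bound_volume} with the Lower Bound Theorem. Read literally, this applies Theorem~\ref{Lower_Bound_Theorem} to $P$ itself. That requires $P$ to be simplicial. Even then it only yields $(d-1)f_0(P)-(d+1)(d-2)$, with $f_0(P)\le n$ in place of $n$. So the chain through $f_{d-1}(P)$ cannot give the stated bound in general. For $P=[-1,1]^3$ one has $f_2(P)=6$, while the asserted bound is $2\cdot 26-4=48$, which equals $3!\,\vol{P}$.

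\textbf{What you do.} You apply the Lower Bound Theorem to the intermediate complex $\Delta$ from the proof of Corollary~\ref{lower_bound_volume}. You require all $n$ boundary lattice points to be vertices of $\Delta$. You then justify that the theorem applies to $\Delta$, either by realizing it as the boundary complex of a simplicial polytope through regularity, or through Kalai's Lower Bound Theorem for simplicial spheres. This is exactly what the chain $(d-1)n-(d+1)(d-2)\le f_{d-1}(\Delta)\le d!\,\vol{P}$ needs. It is what makes the argument work for non-simplicial $P$ and with $n$ rather than $f_0(P)$, so your route is the rigorous form of the paper's intended argument.

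\textbf{Small points to tidy up.}
\begin{itemize}
\item The fan over $\Delta$ is the \emph{face} fan of the realizing polytope $Q$, i.e.\ the normal fan of its polar, not the normal fan of $Q$.
\item Placing triangulations need not use every point; pulling triangulations do, so cite those.
\item The radial perturbation must use a single height function on $V$ that induces $\Delta$ on every facet of $P$ at once. The restriction to $\partial P$ of a regular triangulation of the configuration $\partial P\cap\Z^d$ provides one. The scale must also be small enough that the bends between different facets of $P$ survive.
\end{itemize}
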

\proof
By Corollary \ref{lower_bound_volume} $f_{d-1}(P)\leq d!\vol{P}$. Hence the result follows from Theorem \ref{f_vector_stacked} and Theorem \ref{Lower_Bound_Theorem}. \qed

\section{Proofs of bounds for smooth polytopes}


{\bf Proof of Theorem \ref{prop:smooth_equivalences}:}\\
First we prove the implications (i)~$\Rightarrow$~(ii), (ii)~$\Rightarrow$~(iii) and (iii)~$\Rightarrow$~(i).

(i)~$\Rightarrow$~(ii): 
Let $P$ be a smooth polytope. Consider the triangulation
 $\Delta_{\partial P}$ given by the boundary complex of $P$. 
This is unimodular, hence $h_P=h_{\Delta_{\partial P}}=\delta_P$ by Proposition \ref{prop:Hibi_compressed_delta}.\\

(ii)~$\Rightarrow$~(iii): Suppose that $h_P=\delta_P$. Then, by equation~\ref{eq:volume_from_delta},
$$f_{d-1}=\sum_{i=0}^d h_i=\sum_{i=0}^d\delta_i=d!\,\vol{P}.$$

(iii)~$\Rightarrow$~(i):
Since $P$ is reflexive, by Theorem~\ref{thm:Reflexive_conditions}~(ii) we have that
$$f_{d-1}=d!\,\vol{P}=(d-1)!\,\vol{\partial P}=(d-1)!\,\sum_F\vol{F},$$
where $F$ ranges over all facets of $P$. However, since $P$ is simplicial and integral, $(d-1)!\,\vol{F}=\det{F}\ge 1$, where $\det{F}$ is the index of the sublattice generated by the vertices of $F$. Thus we see that $\det{F}=1$ for all $f_{d-1}$ facets of $P$, and so $P$ is smooth. 

(iii)~$\Leftrightarrow$~(iv): This follows easily from Theorem \ref{thm:Reflexive_conditions} (ii). 
\qed

{\bf Proof of  Corollary \ref{unimodal}:}\\

Corollary \ref{unimodal} follows from Theorem \ref{upper_bound_theorem_h}, Theorem \ref{g_theorem}~(i) and (ii) and Theorem \ref{prop:smooth_equivalences}.
\qed

{\bf Proof of  Corollary \ref{smooth_bounds}:}\\ 

The lower bound follows from  Corollary \ref{lower_bound_volume2}. The upper bound follows from Theorem \ref{f_vector_cyclic}, Theorem \ref{upper_bound_theorem}  and Theorem \ref{prop:smooth_equivalences}.\qed

{\bf Proof of  Corollary \ref{volume_bound}:}\\ 
Let $P$ be a $d$-dimensional smooth Fano polytope. Let $n:=\abs{\partial P\cap\Z^d}$.
Casagrande  proved in  \cite{Cas04} that
$n\leq 3d$. Hence using that
$$
{n-\floor {(d+1)/2}\choose d-\floor {(d+1)/2}}
$$
is a monotone increasing function of $n$, it follows from 
Corollary \ref{smooth_bounds} that
$$
d!\vol{P}\leq {3d-\floor{(d+1)/2}\choose 2d}+{3d-\floor{(d+2)/2} \choose 2d}\leq 2{\floor{\frac{5}{2}d} \choose 2d}.
$$
To approximate the binomial coefficient ${\frac{5}{2}d \choose 2d}$ we used the following bounds of Sondow et al.  (see  \cite{S05}, \cite{SZ06}):  
$$
\frac{1}{4rs}\Big( \frac{(r+1)^{r+1}}{r^r} \Big)^s\leq {(r+1)s\choose s}\leq \Big( \frac{(r+1)^{r+1}}{r^r} \Big)^s,
$$
where $s\geq 1$ is a positive integer and $r\in \R$ is an arbitrary real.
\qed

\section{Concluding remarks}

We conjecture the following upper bound for the volume of smooth Fano lattice polytopes, which is a sharpening of the upper bound appearing in Corollary  \ref{smooth_bounds}.

Our conjecture based on the classification of smooth Fano lattice polytopes in ~\cite{Obr07}. 
\begin{conj}\label{conj:smooth}
Let $P$ be a $d$-dimensional smooth Fano polytope. 
Then the following upper bound is sharp:
$$
d!\vol{P}\leq \frac 12 (3-(-1)^d) 6^{\lfloor d/2\rfloor}.
$$
If $d$ is odd, then there exist precisely two unimodularly equvivalent   $d$-dimensional smooth Fano polytopes  $P$ with  $d!\vol{P}=\frac 12 (3-(-1)^d) 6^{\lfloor d/2\rfloor}$. For both polytopes $f_0=\frac 12(6d+(-1)^{d}-1)$ and only one of these polytopes is centrally symmetric.

If $d$ is even, then there exists precisely one   $d$-dimensional smooth Fano polytope  $P$ with  $d!\vol{P}= (3-(-1)^d) 6^{\lfloor d/2\rfloor}$. Then $f_0=\frac 12(6d+(-1)^{d}-1)$ and $P$ is centrally symmetric.
\end{conj}
\end{document}